\def\no{\noindent}
\def\pmatrix{\left(\begin{array}}
\def\endpmatrix{\end{array}\right)}
\def\no{\noindent}
\def\pmatrix{\left(\begin{array}}
\def\endpmatrix{\end{array}\right)}
\def\RR{\mathbb{R}}
\def\I{{\cal I}}
\def\P{{\cal P}}
\def\dd{\mathrm{d}}
\def\diag{\mathrm{diag}}
\newtheorem{theo}{Theorem}
\newtheorem{rem}{Remark}
\newtheorem{defi}{Definition}
\def\QED{\mbox{~$\Box{~}$}}
\def\bfzero{{\bm{0}}}
\def\bfgamma{{\bm{\gamma}}}
\def\bfphi{{\bm{\phi}}}
\def\bfGamma{{\bm{\Gamma}}}
\def\bfTheta{{\bm{\Theta}}}
\def\aa{\alpha}
\begin{document}

\title{A shooting-Newton procedure for solving fractional terminal value problems}

\author{Luigi Brugnano\,\footnote{
             Dipartimento di Matematica e Informatica ``U.\,Dini'',
             Universit\`a di Firenze,  Italy,
              \url{luigi.brugnano@unifi.it}, 
              \url{https://orcid.org/0000-0002-6290-4107}}
   \and  Gianmarco Gurioli\,\footnote{
                Dipartimento di Matematica e Informatica ``U.\,Dini'',
             Universit\`a di Firenze,  Italy,
              \url{gianmarco.gurioli@unifi.it},
              \url{https://orcid.org/0000-0003-0922-8119}}
   \and Felice Iavernaro\,\footnote{
           Dipartimento di Matematica,
           Universit\`a di Bari Aldo Moro, Italy,
            \url{felice.iavernaro@uniba.it},
            \url{https://orcid.org/0000-0002-9716-7370}}
}

\date{}
\maketitle

\begin{abstract}
In this paper we consider the numerical solution of  {\em fractional terminal value problems}: namely, {\em  terminal value problems for fractional differential equations}. In particular, the proposed method uses a Newton-type iteration which is particularly efficient when coupled with a recently-introduced step-by-step procedure for solving {\em fractional initial value problems}, i.e., {\em  initial value problems for fractional differential equations}. As a result, the method is  able to produce spectrally accurate solutions of fractional terminal value problems.  Some numerical tests are reported to make evidence of its effectiveness.

\medskip
\no{\bf Keywords:} fractional differential equations, fractional integrals, terminal value problems, Jacobi polynomials, Fractional Hamiltonian Boundary Value Methods, FHBVMs

\medskip
\no{\bf MSC:} 34A08, 65R20
\end{abstract}

\section{Introduction} 

Fractional differential equations have gained more and more importance in many applications: we refer, e.g., to the classical references \cite{Di2010,Po1999} for an introduction.

The present contribution is addressed for solving  {\em fractional terminal value problems} namely, {\em terminal value problems for fractional differential equations} (in short, {\em FDE-TVPs}) in the form
\begin{equation}\label{tvp}
y^{(\aa)}(t) = f(y(t)), \qquad t\in[0,T],\qquad y(T) = \eta\in\RR^m,
\end{equation}
where, for the sake of brevity, we have omitted the argument $t$ for $f$. 
Here, for ~$\aa\in(0,1)$, ~$y^{(\aa)}(t) \equiv D^\aa y(t)$~ is the Caputo fractional derivative:
\begin{equation}\label{Dalfa}
D^\aa g(t) = \frac{1}{\Gamma(1-\aa)} \int_0^t (t-x)^{-\aa} \left[\frac{\dd}{\dd x}g(x)\right]\dd x.
\end{equation}
The Riemann-Liouville integral associated to (\ref{Dalfa}) is given by:
\begin{equation}\label{Ialfa}
I^\aa g(t) = \frac{1}{\Gamma(\aa)}\int_0^t (t-x)^{\aa-1} g(x)\dd x.
\end{equation}

Usually, one solves {\em fractional initial value problems}, that is, {\em initial value problems for fractional differential equations}  (in short, {\em FDE-IVPs}) (see, e.g. \cite{DiFoFr2005,Ga2015,Ga2018,LYC16,Lu1985,SOG17}):
\begin{equation}\label{ivp}
y^{(\aa)}(t) = f(y(t)), \qquad t\in[0,T], \qquad y(0) = \rho_0\in\RR^m,
\end{equation}
whose solution is, under suitable assumptions on $f$,  
\begin{equation}\label{sol0}
y(t) = \rho_0 + I^{\aa} f(y(t)) \equiv \rho_0 +  \frac{1}{\Gamma(\aa)}\int_0^t (t-x)^{\aa-1} f(y(x))\dd x, \qquad t\in[0,T].
\end{equation}
However, under suitable hypothesis on $f$ and $T$, also the FDE-TVP is well-posed (see, e.g., \cite{FoMo2011} for the scalar case, and \cite{SWB2021}). Consequently, its numerical solution has been considered by many authors  (see, e.g., \cite{DiUl2023,FoMo2011,G2021,GK2021,LLZ2021,SWB2021}). In particular, the scalar case  of (\ref{tvp}) ($m=1$) allows using a shooting procedure coupled with the bisection method \cite{Di2015,FoMoRe2014} or, more recently, with the secant method \cite{DiUl2023}.  

However, as is clear by their very definition, both the above procedures {\em cannot} be applied for solving vector problems. Motivated by this drawback, in this paper, we propose an alternative approach, based on a straight Newton procedure, able to handle vector problems as well.

The procedure takes advantage of a recently introduced method for solving FDE-IVPs, able to obtain {\em spectrally accurate} approximations \cite{BBBI2024,BGI2024}. This latter approach has been derived as an extension of  Hamiltonian Boundary Value Methods (HBVMs), special low-rank Runge-Kutta methods originally devised for Hamiltonian problems (see, e.g., \cite{BI2016,BI2018}), and later extended along several directions (see, e.g., \cite{ABI2019,ABI2022-1,ABI2023,BBBI2024,BFCIV2022,BI2022}), including the numerical solution of FDEs. A main feature of HBVMs is the fact that they can gain spectrally accuracy, when approximating ODE-IVPs \cite{ABI2020,BIMR2019,BMR2019}, and such a feature has been recently extended to the FDE case \cite{BBBI2024,BGI2024}.

With this premise, the structure of the paper is as follows: in Section~\ref{newton} we sketch the shooting-Newton procedure for solving (\ref{tvp}), along with a corresponding simplified variant; in Section~\ref{qpol} we recall the main facts about the (possibly spectrally accurate) numerical solution of FDE-IVPs  recently proposed in \cite{BBBI2024}, with the extension for the shooting-Newton procedure; in Section~\ref{num} we report a few numerical tests,  including the case of vector problems; at last, a few conclusions are given in Section~\ref{fine}.

\section{The shooting-Newton procedure}\label{newton}

To begin with, let us introduce a perturbation result concerning the solution of the FDE-IVP (\ref{ivp}). In particular, let us denote by $y(t,\rho_0)$ the solution of this problem, in order to emphasize its dependence from the initial condition. The following result holds true.

\begin{theo}\label{fundmat} For $t\in[0,T]$, one has:
\begin{equation}\label{dy0}
\frac{\partial}{\partial \rho_0} y(t,\rho_0) = \Phi(t,\rho_0),
\end{equation} 
which is the solution of the fractional variational problem\,\footnote{As is usual, $f'(y)$ denotes the Jacobian matrix of $f(y)$.}
\begin{equation}\label{varpro}
\Phi^{(\aa)}(t,\rho_0) = f'(y(t,\rho_0))\Phi(t,\rho_0), \qquad t\in[0,T], \qquad \Phi(0,\rho_0) = I,
\end{equation}
explicitly given by:
\begin{equation}\label{Phi}
\Phi(t,\rho_0) = I + \frac{1}{\Gamma(\aa)}\int_0^t (t-x)^{\aa-1} f'(y(x,\rho_0))\Phi(x,\rho_0)\dd x.
\end{equation}
\end{theo}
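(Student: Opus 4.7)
The plan is to start from the integral form of the solution of the FDE--IVP (\ref{ivp}), namely
$$
y(t,\rho_0) = \rho_0 + \frac{1}{\Gamma(\aa)}\int_0^t (t-x)^{\aa-1} f(y(x,\rho_0))\,\dd x,
$$
which follows from (\ref{sol0}), and to differentiate both sides with respect to the parameter vector $\rho_0$. Under the standing assumption that $f$ is smooth enough for $y(\cdot,\rho_0)$ to depend differentiably on $\rho_0$, the chain rule applied inside the integral gives
$$
\frac{\partial}{\partial \rho_0}y(t,\rho_0)
= I + \frac{1}{\Gamma(\aa)}\int_0^t (t-x)^{\aa-1} f'(y(x,\rho_0))\,\frac{\partial}{\partial \rho_0}y(x,\rho_0)\,\dd x,
$$
which, upon setting $\Phi(t,\rho_0):=\partial y(t,\rho_0)/\partial\rho_0$, is precisely the integral equation (\ref{Phi}).

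Next, I would derive the fractional variational equation (\ref{varpro}) from (\ref{Phi}) by applying the Caputo operator $D^\aa$ to both sides. Since $D^\aa$ annihilates constants, the contribution of the $I$ term vanishes, while the convolution integral is by definition $I^\aa[f'(y(\cdot,\rho_0))\Phi(\cdot,\rho_0)](t)$, and the standard identity $D^\aa I^\aa g = g$ (valid for the Caputo derivative acting on a Riemann--Liouville integral of a sufficiently regular function) yields
$$
\Phi^{(\aa)}(t,\rho_0) = f'(y(t,\rho_0))\,\Phi(t,\rho_0).
$$
The initial value $\Phi(0,\rho_0)=I$ is immediate from (\ref{Phi}), since the integral evaluated at $t=0$ vanishes.

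The only genuinely delicate step is the exchange of $\partial/\partial\rho_0$ with the integral, because the kernel $(t-x)^{\aa-1}$ is singular at $x=t$. However, since $\aa\in(0,1)$ the singularity is integrable, and under a local Lipschitz assumption on $f$ (which also guarantees well-posedness of (\ref{ivp})), one obtains uniform-in-$\rho_0$ bounds on $\partial y/\partial\rho_0$ on compact subsets of $[0,T]$, so that dominated convergence justifies the interchange. This is the main technical point; everything else is a direct calculation. Once (\ref{Phi}) is established in this way, it serves simultaneously as the explicit representation of $\Phi$ and as the integral reformulation of the variational problem (\ref{varpro}), completing the proof. \QED
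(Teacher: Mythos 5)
Your proof is correct, but it runs in the opposite direction from the paper's. The paper differentiates the \emph{differential} form of the problem: it commutes $D^\aa$ with $\partial/\partial\rho_0$, applies the chain rule to $f(y(t,\rho_0))$ to obtain the variational equation (\ref{varpro}) together with $\Phi(0,\rho_0)=I$, and only then reads off the integral representation (\ref{Phi}) from (\ref{Ialfa})--(\ref{sol0}). You instead differentiate the \emph{integral} representation (\ref{sol0}) under the integral sign, obtaining (\ref{Phi}) directly, and then recover (\ref{varpro}) by applying $D^\aa$ and using $D^\aa I^\aa g=g$. The two routes are logically equivalent; each hinges on one interchange of limits (the paper: $D^\aa$ with $\partial/\partial\rho_0$; you: $\partial/\partial\rho_0$ with a weakly singular integral). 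Your version has the advantage of making that delicate step explicit and of justifying it — the integrable singularity $(t-x)^{\aa-1}$ plus uniform bounds on $\partial y/\partial\rho_0$ and dominated convergence — whereas the paper leaves the analogous commutation entirely formal. The paper's version is shorter and parallels the classical ODE variational-equation argument more transparently. Both establish the claim.
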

\begin{proof}
In fact, from (\ref{Dalfa}) and (\ref{ivp}), one has: 
$$
 D^{\aa} \frac{\partial}{\partial \rho_0}y(t,\rho_0) \,=\, \frac{\partial}{\partial \rho_0} D^{\aa} y(t) \,=\, \frac{\partial}{\partial \rho_0} f(y(t,\rho_0)) \,=\, f'(y(t,\rho_0))\frac{\partial}{\partial \rho_0}y(t,\rho_0),
$$
and
$$\left.\frac{\partial}{\partial \rho_0}y(t,\rho_0)\right|_{t=0} \,=\, \frac{\partial}{\partial \rho_0}\rho_0 \,=\, I.$$
Consequently, (\ref{dy0})-(\ref{varpro}) follows and, therefore, also (\ref{Phi}) follows from (\ref{Ialfa}) and (\ref{sol0}).~\QED
\end{proof}\bigskip

\begin{rem}\label{wellposed}
Hereafter, in order to guarantee the well-posedness of problem (\ref{tvp}), if $y(0)=\rho^*$ is the initial value of (\ref{ivp}) fulfilling the FDE-TVP, i.e.,
\begin{equation}\label{soleta} y(T,\rho^*) = \eta,\end{equation}
we shall assume that (see (\ref{dy0})) 
\begin{equation}\label{PhiT0}
\det\left(\Phi(T,\rho^*)\right)\ne0.
\end{equation}
Assuming that $f$ is continuously differentiable in a neighborhood of the solution, in turn (\ref{PhiT0}) implies that
\begin{equation}\label{delta}
\exists\, \delta>0 ~s.t.~ \|\rho^*-\rho\|\le\delta ~\Rightarrow~ \det\left(\Phi(T,\rho)\right)\ne0.
\end{equation}
\end{rem}
\bigskip

The previous results allow us stating the shooting-Newton procedure for solving (\ref{tvp}) sketched in Table~\ref{alg1}, where a suitable stopping criterion has to be adopted. Moreover, the starting approximation $\rho_0$ for the shooting-Newton iteration has to be chosen in some way, possibly exploiting any additional information; conversely, the choice  $\rho_0=\eta$ (i.e., the final value in (\ref{tvp})) can be considered, as proposed in \cite{DiUl2023}.

\begin{table}[t]
\caption{Algorithm~\ref{alg1} -- the shooting-Newton procedure.}
\label{alg1}
\hrulefill
\begin{eqnarray*}
{\rm fix~} \rho_0\\
{\rm for ~~} \ell &=& 0,1,\dots\\
                    &&{\rm solve:}~\, y^{(\aa)}(t,\rho_\ell)=f(y(t,\rho_\ell)),\qquad\qquad~\,\, t\in[0,T],\qquad y(0,\rho_\ell) = \rho_\ell\\
                    &&~\,{\rm and} \quad     \Phi^{(\aa)}(t,\rho_\ell) = f'(y(t,\rho_\ell))\Phi(t,\rho_\ell),\quad t\in[0,T],\qquad \Phi(0,\rho_\ell)=I\\[2mm]
                    &&{\rm set:~} \quad \rho_{\ell+1} = \rho_\ell-\Phi(T,\rho_\ell)^{-1}\left[ y(T,\rho_\ell)-\eta\right]\\
                    {\rm end ~~}
\end{eqnarray*}
\hrulefill
\end{table}

\begin{rem} Though the procedure described in Algorithm~\ref{alg1} appears to be easily derived, at the best of our knowledge, it has not yet been considered for solving FDE-TVPs, so far. Moreover, the use of the variational problem, involved in its implementation and described in the next section, is  novel as well.
\end{rem}

The following straightforward convergence result holds true.

\begin{theo}\label{perron}  Assume that,  in a neighborhood of the solution $\xi=\rho^*$:   
\begin{itemize}
\item[\rm (i)] $f$ is continuously differentiable,

\smallskip
\item[\rm (ii)] $\Phi(T,\xi)^{-1}$ is differentiable.
\end{itemize}
Then, the shooting-Newton procedure given in Table~\ref{alg1} converges in a suitable neighborhood of $\rho^*$.
\end{theo}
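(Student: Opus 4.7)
The plan is to recognize that the iteration in Algorithm~\ref{alg1} is exactly the classical Newton iteration applied to the vector map
$$
F:\rho\mapsto y(T,\rho)-\eta,
$$
so that the statement reduces to the standard local convergence theorem for Newton's method in $\RR^m$.

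First I would collect the ingredients. By Remark~\ref{wellposed}, $\rho^*$ satisfies $F(\rho^*)=0$. By Theorem~\ref{fundmat}, the Jacobian of $F$ with respect to the shooting parameter is
$$
F'(\rho)=\frac{\partial}{\partial\rho}y(T,\rho)=\Phi(T,\rho),
$$
and hypothesis~(i), together with (\ref{Phi}) and a standard continuity-with-respect-to-data argument for the Volterra integral equations (\ref{sol0}) and (\ref{Phi}), implies that $y(T,\cdot)$ and $\Phi(T,\cdot)$ depend continuously on $\rho$ in a neighborhood of $\rho^*$; hence $F\in C^1$ there. Hypothesis~(ii), in view of (\ref{PhiT0}) which is built in through (\ref{delta}), guarantees that $F'(\rho)^{-1}=\Phi(T,\rho)^{-1}$ exists and is Lipschitz (indeed, differentiable) on a ball $\|\rho-\rho^*\|\le\delta$.

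Next I would rewrite the update rule of Table~\ref{alg1} as
$$
\rho_{\ell+1}=\rho_\ell-F'(\rho_\ell)^{-1}F(\rho_\ell),
$$
which is the pure Newton iteration for $F(\rho)=0$. Invoking the classical local convergence theorem (see, e.g., any standard reference on Newton's method in finite dimension), the conditions $F(\rho^*)=0$, $F\in C^1$ near $\rho^*$, and $F'(\rho^*)$ invertible with locally Lipschitz inverse are precisely what is needed to conclude that there exists $0<\delta'\le\delta$ such that for every $\rho_0$ with $\|\rho_0-\rho^*\|\le\delta'$ the sequence $\{\rho_\ell\}$ is well defined, stays in the ball, and converges to $\rho^*$ (in fact quadratically, although the statement only claims convergence).

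The only genuine obstacle is verifying the smooth dependence of $y(T,\rho)$ and $\Phi(T,\rho)$ on $\rho$, which is not an ODE-standard fact because of the fractional kernel. However, this is handled exactly as in the classical theory: writing (\ref{sol0}) and (\ref{Phi}) as fixed-point equations on $C([0,T],\RR^m)$ with the weakly singular kernel $(t-x)^{\aa-1}/\Gamma(\aa)$, a Banach fixed-point/Gronwall-type argument (possible under hypothesis~(i)) yields continuity, and then differentiability, of the map $\rho\mapsto(y(\cdot,\rho),\Phi(\cdot,\rho))$; specializing at $t=T$ gives the required regularity of $F$ and closes the argument.
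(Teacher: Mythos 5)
Your proposal is correct, but it reaches the conclusion by a different key tool than the paper. You treat the update in Table~\ref{alg1} as the pure Newton iteration for $F(\rho)=y(T,\rho)-\eta$, with $F'(\rho)=\Phi(T,\rho)$ supplied by Theorem~\ref{fundmat}, and then invoke the classical local convergence theorem for Newton's method in $\RR^m$. The paper instead regards the update as a fixed-point iteration $\rho_{\ell+1}=\Psi(\rho_\ell)$ with $\Psi(\xi)=\xi-\Phi(T,\xi)^{-1}[y(T,\xi)-\eta]$, observes via (\ref{dy0}) and $y(T,\rho^*)=\eta$ that the Jacobian of $\Psi$ vanishes at $\xi=\rho^*$, and concludes by the Perron theorem on asymptotic stability of fixed points (which is also why hypothesis (ii) is phrased as differentiability of $\Phi(T,\xi)^{-1}$: it is exactly what is needed to differentiate $\Psi$). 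The two arguments are mathematically equivalent in substance --- the textbook proof of Newton's local convergence is essentially the fixed-point observation the paper makes --- but the paper's route makes the role of hypothesis (ii) transparent and yields exponential convergence directly, whereas your route buys a cleaner black-box citation and, commendably, makes explicit the step the paper leaves implicit, namely that $\rho\mapsto y(T,\rho)$ and $\rho\mapsto\Phi(T,\rho)$ are $C^1$ near $\rho^*$ via the weakly singular Volterra fixed-point equations (\ref{sol0}) and (\ref{Phi}). One small caution: differentiability of $\Phi(T,\cdot)^{-1}$ alone does not give a local Lipschitz bound as you assert in passing; for the convergence claim continuity of the inverse (which follows from continuity of $\Phi(T,\cdot)$ plus (\ref{delta})) already suffices, so this does not affect the validity of your argument.
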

\begin{proof}
In fact, from (\ref{soleta}) it follows that  $\rho^*$ is a fixed-point of the corresponding iteration function,
$$\Psi(\xi) := \xi - \Phi(T,\xi)^{-1}\left[y(T,\xi)-\eta\right],$$
whose Jacobian (recall (\ref{dy0})) vanishes at $\xi=\rho^*$. Consequently, from the Perron Theorem \cite[Corollary\,4.7.2]{LT1988}, exponential convergence is granted, in a suitable neighborhood of $\rho^*$.\,\QED
\end{proof}

\bigskip
Further, if convergent, the procedure converges quadratically. However, to prove this statement, we need to recall some well-known results about the Taylor theorem. 
In more detail, with reference to (\ref{dy0}), assume that $\Phi(T,\xi)$ is continuously differentiable in a suitable neighborhood of the solution. Then, by setting $y_i$ the $i$-th entry of $y$, for a given $\rho$ suitably close to $\rho_\ell$ there exists $\theta_i\in[0,1]$ such taht:
\begin{eqnarray*}
y_i(T,\rho) &=& y_i(T,\rho_\ell) + \left.\frac{\partial}{\partial\xi} y_i(T,\xi)\right|_{\xi=\rho_\ell} (\rho-\rho_\ell) \\
&&+ \frac{1}2 (\rho-\rho_\ell)^\top \left.\frac{\partial^2}{\partial\xi^2} y_i(T,\xi)\right|_{\xi=\rho_\ell +\theta_i(\rho-\rho_\ell)} (\rho-\rho_\ell), \qquad i=1,\dots,m,
\end{eqnarray*}
with $\frac{\partial^2}{\partial\xi^2} y_i(T,\xi)$ the Hessian matrix of $y_i(T,\xi)$. The previous relations can be written in vector form as follows:
$$y(T,\rho) = y(T,\rho_\ell) + \Phi(T,\rho_\ell)(\rho-\rho_\ell) + \frac{1}2\Phi'(T,\Sigma_\ell(\rho))\left((\rho-\rho_\ell),(\rho-\rho_\ell)\right),$$
with $$\Sigma_\ell(\rho) = \pmatrix{ccc} \rho_\ell+\theta_1(\rho-\rho_\ell),& \dots, &\rho_\ell+\theta_m(\rho-\rho_\ell)\endpmatrix\in\RR^{m\times m},$$
and $\Phi'(T,\Sigma_\ell(\rho))$ denoting the derivative of $\Phi$, whose $i$-th ``slice'' is evaluated in the $i$-th column of $\Sigma_\ell(\rho)$. With this premise, we can now state the following result.

\begin{theo}\label{quadratic}  Assume that,  in a neighborhood of the solution $\xi=\rho^*$,   
$\Phi(T,\xi)$ is continuously differentiable. Then, if convergent, the shooting-Newton procedure given in Table~\ref{alg1} converges quadratically.
\end{theo}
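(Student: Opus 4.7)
The plan is to combine the Taylor expansion just developed with the defining recurrence of Algorithm~\ref{alg1}, applied at the fixed point $\rho^\ast$ which satisfies $y(T,\rho^\ast)=\eta$. This is the standard route for Newton-type schemes: evaluate the second-order Taylor expansion between the current iterate $\rho_\ell$ and the exact root $\rho^\ast$, recognize the Newton increment inside it, and read off that the error at step $\ell+1$ is driven purely by the quadratic remainder.

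Concretely, first I would set $\rho=\rho^\ast$ in the Taylor identity that was just established, i.e.
$$
\eta \,=\, y(T,\rho^\ast) \,=\, y(T,\rho_\ell)+\Phi(T,\rho_\ell)(\rho^\ast-\rho_\ell)+\tfrac{1}{2}\Phi'(T,\Sigma_\ell(\rho^\ast))\bigl((\rho^\ast-\rho_\ell),(\rho^\ast-\rho_\ell)\bigr).
$$
Using assumption (\ref{PhiT0}) together with (\ref{delta}), the matrix $\Phi(T,\rho_\ell)$ is invertible for $\rho_\ell$ close to $\rho^\ast$, so I may left-multiply by $-\Phi(T,\rho_\ell)^{-1}$ and rearrange to isolate the Newton increment on the left-hand side. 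Adding $\rho_\ell$ to both sides and subtracting $\rho^\ast$ then produces the key identity
$$
\rho_{\ell+1}-\rho^\ast \,=\, \tfrac{1}{2}\,\Phi(T,\rho_\ell)^{-1}\,\Phi'(T,\Sigma_\ell(\rho^\ast))\bigl((\rho^\ast-\rho_\ell),(\rho^\ast-\rho_\ell)\bigr),
$$
because the linear part of the expansion exactly matches the Newton correction $\Phi(T,\rho_\ell)^{-1}[y(T,\rho_\ell)-\eta]$.

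From here, taking any sub-multiplicative norm gives
$$
\|\rho_{\ell+1}-\rho^\ast\|\,\le\,\tfrac{1}{2}\,\|\Phi(T,\rho_\ell)^{-1}\|\cdot\|\Phi'(T,\Sigma_\ell(\rho^\ast))\|\cdot\|\rho^\ast-\rho_\ell\|^2.
$$
Under the hypothesis that $\Phi(T,\xi)$ is continuously differentiable near $\rho^\ast$, and using (\ref{delta}) to keep $\Phi(T,\rho_\ell)^{-1}$ bounded for $\rho_\ell$ sufficiently close to $\rho^\ast$, both norms on the right-hand side can be majorized by a common constant $C$ for all $\ell$ large enough. Since the procedure is assumed convergent, $\rho_\ell\to\rho^\ast$, and therefore $\Sigma_\ell(\rho^\ast)\to\rho^\ast$ as well, so the bounds kick in from some index onward, yielding $\|\rho_{\ell+1}-\rho^\ast\|\le C\,\|\rho_\ell-\rho^\ast\|^2$, which is the desired quadratic convergence.

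The only mildly delicate point is handling the second-derivative term $\Phi'(T,\Sigma_\ell(\rho^\ast))$: one must note that this object collects the Hessians of the components $y_i(T,\xi)$ evaluated at (generally) different intermediate points along the segment joining $\rho_\ell$ and $\rho^\ast$, as produced by the componentwise mean-value theorem recalled before the statement. Continuity of these Hessians in a neighborhood of $\rho^\ast$, which follows from the assumed continuous differentiability of $\Phi(T,\cdot)$, is what legitimately allows the uniform bound above. No other step requires more than routine manipulations.
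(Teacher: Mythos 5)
Your proposal is correct and follows essentially the same route as the paper's own proof: plug $\rho=\rho^\ast$ into the second-order Taylor expansion, cancel the linear term against the Newton increment to get $e_{\ell+1}=-\tfrac12\,\Phi(T,\rho_\ell)^{-1}\Phi'(T,\Sigma_\ell(\rho^\ast))(e_\ell,e_\ell)$, and pass to norms. The only cosmetic difference is that the paper states the conclusion as a bound on $\lim_{\ell\to\infty}\|e_{\ell+1}\|/\|e_\ell\|^2$ rather than a uniform constant $C$, which is equivalent given the assumed convergence and continuity.
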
 
\begin{proof} By using the notation about the Taylor theorem exposed before, one has:
 \begin{eqnarray*}
 0&=&y(T,\rho^*)-\eta \\
 &=& y(T,\rho_\ell)-\eta + \Phi(T,\rho_\ell)\left(\rho^*-\rho_\ell\right) + \frac{1}2\Phi'(T,\Sigma_\ell(\rho^*))\left( (\rho^*-\rho_\ell),(\rho^*-\rho_\ell)\right).
  \end{eqnarray*}
Consequently, considering that\,\footnote{We recall that (\ref{delta}) holds true.}
  $$\rho_{\ell+1} = \rho_\ell - \Phi(T,\rho_\ell)^{-1}[ y(T,\rho_\ell)-\eta],$$
and  setting $e_\ell=\rho^*-\rho_\ell$ the error at step $\ell$, one derives:
$$
 e_{\ell+1} = -\frac{1}2 \Phi(T,\rho_\ell)^{-1} \Phi'(T,\Sigma_\ell(\rho^*))\left( e_\ell,e_\ell\right).
$$
Passing to norms, one eventually obtains
$$\frac{\|e_{\ell+1}\|}{\|e_\ell\|^2} \le \frac{1}2 \|\Phi(T,\rho_\ell)^{-1}\|\,\| \Phi'(T,\Sigma_\ell(\rho^*))\|.$$
Consequently, 
$$\lim_{\ell\rightarrow\infty}\frac{\|e_{\ell+1}\|}{\|e_\ell\|^2}\le  \frac{1}2 \|\Phi(T,\rho^*)^{-1}\|\,\| \Phi'(T,\Sigma^*)\|,$$
 where $\Sigma^*$ now denotes the matrix with all the columns equal to $\rho^*$.\,\QED 
 \end{proof}

\bigskip
An interesting additional feature is given by the following result.

\begin{theo}\label{linear}
For problems in the form
\begin{equation}\label{linp}
y^{(\aa)} = A(t)y+b(t), \qquad t\in[0,T], \qquad y(T)=\eta\in\RR^m,
\end{equation}
with $A(t)$ and $b(t)$ continuous functions, 
the algorithm described in Table~\ref{alg1} converges in exactly one iteration.
\end{theo}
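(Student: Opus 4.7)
The plan is to exploit the fact that when $f(y)=A(t)y+b(t)$, the map $\rho \mapsto y(T,\rho)$ is affine, so a single Newton step lands on the exact root.

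First I would observe that in the linear case $f'(y(t,\rho_\ell))=A(t)$, which is independent of both $\rho_\ell$ and of the current iterate $y(t,\rho_\ell)$. Consequently the variational problem (\ref{varpro}) reduces to
\begin{equation*}
\Phi^{(\aa)}(t)=A(t)\Phi(t),\qquad \Phi(0)=I,
\end{equation*}
and its solution $\Phi(t)$ is the same at every iteration; in particular $\Phi(T,\rho_\ell)=\Phi(T)$ for all $\ell$.

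Next I would use linearity to decompose $y(t,\rho_\ell)$. By superposition, the solution of the IVP (\ref{linp}) with $y(0)=\rho_\ell$ can be written as
\begin{equation*}
y(t,\rho_\ell)=\Phi(t)\rho_\ell + y_p(t),
\end{equation*}
where $y_p$ is the particular solution corresponding to the initial datum $y_p(0)=0$ and the forcing $b(t)$. This identity is exactly the affine dependence of the solution on the initial value, and it can be justified by plugging into (\ref{sol0}) or, equivalently, by noting that $y(t,\rho_\ell)-\Phi(t)\rho_\ell$ solves the inhomogeneous FDE with zero initial value.

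Substituting into the Newton update in Table~\ref{alg1} then gives
\begin{equation*}
\rho_{\ell+1}=\rho_\ell-\Phi(T)^{-1}\bigl[\Phi(T)\rho_\ell+y_p(T)-\eta\bigr]=\Phi(T)^{-1}\bigl[\eta-y_p(T)\bigr],
\end{equation*}
which is independent of $\rho_\ell$. In particular $\rho_1$ satisfies $\Phi(T)\rho_1+y_p(T)=\eta$, i.e., $y(T,\rho_1)=\eta$, so $\rho_1=\rho^*$ and the iteration has reached the solution in one step. An equivalent and even shorter route is to invoke Theorem~\ref{quadratic}: since $f$ is affine in $y$, the second variational object $\Phi'(T,\cdot)$ vanishes identically, and the error bound derived there yields $e_1=0$ directly. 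There is no real obstacle here beyond being careful that $\Phi(T)$ is the same matrix used to form the Newton correction, which is automatic because $f'$ does not depend on $y$. \QED
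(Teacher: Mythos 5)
Your proof is correct and follows essentially the same route as the paper: both arguments rest on the fact that $\Phi(t)$ is independent of the initial datum in the linear case and that $y(T,\rho)$ depends affinely on $\rho$ with linear part $\Phi(T)$, so one Newton step is exact. The paper phrases this via the difference $y(t,\rho_0)-y(t,\rho^*)=\Phi(t)(\rho_0-\rho^*)$ rather than your explicit decomposition $y(t,\rho)=\Phi(t)\rho+y_p(t)$, but these are the same observation.
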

\begin{proof}
In fact, in such a case, the variational problem (\ref{varpro}) simplifies to
$$\Phi^{(\aa)}(t) = A(t) \Phi(t), \qquad t\in[0,T],\qquad \Phi(0) = I,$$
i.e., $\Phi(t)$ does not depend on the initial condition. Further, by using the same notation as above,
$$\left[y(t,\rho_0)-y(t,\rho^*)\right]^{(\aa)} = A(t)\left[ y(t,\rho_0)-y(t,\rho^*)\right], \qquad t\in[0,T],$$
whose solution is given by 
$$\left[ y(t,\rho_0)-y(t,\rho^*)\right] = \Phi(t)\left[\rho_0-\rho^*\right], \qquad t\in[0,T].$$
Consequently, at $t=T$ one has:
$$\left[ y(T,\rho_0) - \eta \right] = \Phi(T)\left[\rho_0-\rho^*\right].$$
That is,\footnote{We recall that the assumption $\det(\Phi(T))\ne0$ must clearly hold.}
$$\rho^* = \rho_0 -\Phi(T)^{-1}\left[ y(T,\rho_0) - \eta \right],$$
so that convergence is gained in exactly one iteration, since the r.h.s. amounts to the very first iteration of Algorithm~\ref{alg1} used for solving (\ref{linp}).\,\QED
\end{proof}

\subsection{A simplified Newton-iteration}

As is well-known, sometimes it can be computationally convenient to implement a {\em simplified Newton iteration}, instead of the basic one. This involve using a simplified version of the algorithm shown in Table~\ref{alg1}: it is sketched in Table~\ref{alg2}.

\begin{table}[t]
\caption{Algorithm~\ref{alg2} -- the simplified shooting-Newton procedure.}
\label{alg2}
\hrulefill
\begin{eqnarray*}
{\rm fix~} \rho_0\\
{\rm for ~~} \ell &=& 0,1,\dots\\
                    &&{\rm solve:}~\, y^{(\aa)}(t,\rho_\ell)=f(y(t,\rho_\ell)),\qquad\qquad~\,\, t\in[0,T],\qquad y(0,\rho_\ell) = \rho_\ell\\
                    &&~\,{\rm and ~ compute} \quad     \hat\Phi(T,\rho_\ell) \approx \Phi(T,\rho_\ell)\\[2mm]
                    &&{\rm set:~} \quad \rho_{\ell+1} = \rho_\ell-\hat\Phi(T,\rho_\ell)^{-1}\left[ y(T,\rho_\ell)-\eta\right]\\
                    {\rm end ~~}
\end{eqnarray*}
\hrulefill
\end{table}

For this simplified shooting-Newton procedure, the following result holds true, the proof being similar to that of Theorem~\ref{perron}.

\begin{theo}\label{perron1}
Assume the hypotheses of Theorem~\ref{perron} hold true and that $\hat\Phi(T,\xi)$ is continuously invertible in a neighborhood of $\xi=\rho^*$. Further, assume that the spectral radius of the matrix
$$I-\hat\Phi(T,\rho^*)^{-1}\Phi(T,\rho^*)$$ is less than 1. Then, the algorithm described in Table~\ref{alg2} converges in a suitable neighborhood of the solution $\rho^*$.
\end{theo}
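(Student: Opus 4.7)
The plan is to mimic the structure of the proof of Theorem~\ref{perron}, replacing the exact Newton iteration function with its simplified analogue, and then appeal to the Perron (a.k.a.\ Ostrowski) fixed-point theorem. Concretely, I would introduce the iteration map
\[
\hat\Psi(\xi) \,:=\, \xi - \hat\Phi(T,\xi)^{-1}\bigl[y(T,\xi)-\eta\bigr],
\]
defined on the neighborhood of $\rho^*$ where $\hat\Phi(T,\xi)^{-1}$ exists and is continuously differentiable. The first step is to observe that $\hat\Psi(\rho^*) = \rho^*$, since (\ref{soleta}) gives $y(T,\rho^*)-\eta = 0$, so $\rho^*$ is a fixed point of the simplified shooting iteration, as required.

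Next, I would differentiate $\hat\Psi$ with respect to $\xi$. By the product rule,
\[
\hat\Psi'(\xi) \,=\, I - \frac{\partial}{\partial\xi}\bigl[\hat\Phi(T,\xi)^{-1}\bigr]\bigl[y(T,\xi)-\eta\bigr] - \hat\Phi(T,\xi)^{-1}\frac{\partial}{\partial\xi}y(T,\xi).
\]
Evaluating at $\xi = \rho^*$, the middle term vanishes because $y(T,\rho^*)-\eta = 0$, while, by (\ref{dy0}), $\partial y(T,\xi)/\partial\xi|_{\xi=\rho^*} = \Phi(T,\rho^*)$. Hence
\[
\hat\Psi'(\rho^*) \,=\, I - \hat\Phi(T,\rho^*)^{-1}\Phi(T,\rho^*),
\]
which is exactly the matrix whose spectral radius is assumed to be less than $1$.

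The conclusion then follows directly from the Perron theorem \cite[Corollary\,4.7.2]{LT1988}: a continuously differentiable fixed-point map whose Jacobian at the fixed point has spectral radius strictly less than $1$ is locally contractive (in a suitable norm), so the iterates $\rho_{\ell+1} = \hat\Psi(\rho_\ell)$ converge to $\rho^*$ whenever $\rho_0$ is chosen in a sufficiently small neighborhood of $\rho^*$. The main technical point to justify carefully is the differentiability of $\hat\Psi$ in a neighborhood of $\rho^*$: this rests on the assumed continuous invertibility of $\hat\Phi(T,\xi)$, the differentiability of $y(T,\xi)$ with respect to $\xi$ (Theorem~\ref{fundmat} together with the regularity of $f$ from hypothesis (i) of Theorem~\ref{perron}), and the differentiability of $\Phi(T,\xi)^{-1}$ from hypothesis (ii); once these are in hand, the above computation of $\hat\Psi'(\rho^*)$ is immediate and the Perron argument closes the proof. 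Unlike the exact Newton case, we no longer obtain that $\hat\Psi'(\rho^*) = 0$, so the convergence is merely linear (and its rate is controlled by the spectral radius above), rather than quadratic.
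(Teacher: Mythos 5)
Your proposal is correct and is precisely the argument the paper intends: the paper omits the proof of Theorem~\ref{perron1}, stating only that it is ``similar to that of Theorem~\ref{perron}'', and your computation of the fixed-point map $\hat\Psi$, its Jacobian $I-\hat\Phi(T,\rho^*)^{-1}\Phi(T,\rho^*)$ at $\rho^*$, and the appeal to the Perron theorem is exactly that adaptation. The only (minor) refinement worth noting is that differentiability of $\hat\Psi$ at $\rho^*$ already follows from continuity of $\hat\Phi(T,\xi)^{-1}$ together with $y(T,\rho^*)-\eta=0$, so the product-rule step does not actually require $\hat\Phi(T,\xi)^{-1}$ to be differentiable.
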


\bigskip
\begin{rem}
However, as one may expect, in this case the results of Theorems~\ref{quadratic} and \ref{linear} does not hold, in general. As matter of fact, only a linear convergence can be granted and, for linear problems, convergence in one iteration cannot be expected, in general.
\end{rem}

\section{Implementing the algorithm}\label{qpol}

Following the approach in \cite{BBBI2024}, let us now explain the way we compute $y(T,\rho_\ell)$ in Algorithms~\ref{alg1} and \ref{alg2}, and $\Phi(T,\rho_\ell)$ in Algorithm~\ref{alg1}.\footnote{On the contrary, $\hat\Phi(T,\rho_\ell)$ in Algorithm~\ref{alg2} is strictly problem dependent and its computation cannot be stated in general: however, a relevant specific case will be considered in Section\,\ref{semilin}.}
From (\ref{sol0}) and (\ref{Phi}), we have to compute:
\begin{equation}\label{yT}
y(T,\rho_\ell) ~=~ \rho_\ell +\frac{1}{\Gamma(\aa)}\int_0^T (T-x)^{\aa-1}f(y(x,\rho_\ell))\dd x,
\end{equation}
and
\begin{equation}\label{PhiT}
\Phi(T,\rho_\ell) ~=~ I + \frac{1}{\Gamma(\aa)}\int_0^T (T-x)^{\aa-1}f'(y(x,\rho_\ell))\Phi(x,\rho_\ell)\dd x.
\end{equation}
To begin with, in order to obtain a piecewise approximation to the solution of the two problems,  we consider a partition of the integration interval in the form:
\begin{equation}\label{tn}
t_n = t_{n-1} + h_n,  \qquad n=1,\dots,N, 
\end{equation}
where
\begin{equation}\label{T}
t_0=0, \qquad t_N= T\equiv \sum_{n=1}^N h_n.
\end{equation} 
In general \cite{BBBI2024}, for coping with possible singularities in the derivative of the vector field at the origin, we shall consider the following {\em graded mesh},
\begin{equation}\label{hn}
h_n = r^{n-1} h_1, \qquad n=1\dots,N,
\end{equation}
where $r>1$ and $h_1>0$ satisfy, by virtue of (\ref{T})-(\ref{hn}),
\begin{equation}\label{rT}
h_1\frac{r^N-1}{r-1} = T.
\end{equation}
Clearly, when a uniform mesh is considered then, in (\ref{hn}), $r=1$ and $h_1=h:=T/N$, so that $h_n=h$, $n=1,\dots,N$.

By setting
\begin{equation}\label{yn}
y_n(ch_n,\rho_\ell) := y(t_{n-1}+ch_n,\rho_\ell), \qquad c\in[0,1],\qquad n=1,\dots,N,
\end{equation}
the restriction of the solution of (\ref{yT}) on the interval $[t_{n-1},t_n]$, and taking into account (\ref{tn})--(\ref{hn}), one then obtains:
\begin{eqnarray}
\nonumber \lefteqn{
y(T,\rho_\ell) ~\equiv~y_N(h_N,\rho_\ell) ~=~ \rho_\ell + \frac{1}{\Gamma(\aa)}\int_0^T (T-x)^{\aa-1} f(y(x,\rho_\ell))\dd x}\\
\nonumber
&=&~\rho_\ell + \frac{1}{\Gamma(\aa)}\sum_{n=1}^N \int_{t_{n-1}}^{t_n} (t_N-x)^{\aa-1}f(y(x,\rho_\ell))\dd x\\[2mm]    
\nonumber
&=&~\rho_\ell + \frac{1}{\Gamma(\aa)}\sum_{n=1}^N \int_0^{h_n} (t_N-t_{n-1}-x)^{\aa-1}f(y_n(x,\rho_\ell))\dd x\\[2mm]  
&=&~\rho_\ell + \frac{1}{\Gamma(\aa)}\sum_{n=1}^N h_n^\aa\int_0^1 
\left(\frac{r^{N-n+1}-1}{r-1}-\tau\right)^{\aa-1}f(y_n(\tau h_n,\rho_\ell))\dd \tau.\qquad \label{yTr}
\end{eqnarray}
In case of a constant stepsize $h=T/N$ is used, the previous equation becomes:
\begin{eqnarray}\nonumber
y(T,\rho_\ell)&\equiv& y_N(h,\rho_\ell)\\
&=& \rho_\ell + \frac{h^\aa}{\Gamma(\aa)}\sum_{n=1}^N \int_0^1 
\left(N-n+1-\tau\right)^{\aa-1}f(y_n(\tau h,\rho_\ell))\dd \tau.\quad \label{yT1}
\end{eqnarray}

Similarly, for (\ref{PhiT}), by setting
\begin{equation}\label{Phin}
\Phi_n(ch_n,\rho_\ell) := \Phi(t_{n-1}+ch_n,\rho_\ell), \qquad c\in[0,1],\qquad n=1,\dots,N,
\end{equation}
the restriction of the solution on the interval $[t_{n-1},t_n]$, again by virtue of (\ref{tn})--(\ref{hn}), one obtains:
\begin{eqnarray}
\nonumber \lefteqn{
\Phi(T,\rho_\ell) ~\equiv~\Phi_N(h_N,\rho_\ell)}\\[2mm] \nonumber
&=& I + \frac{1}{\Gamma(\aa)}\int_0^T (T-x)^{\aa-1} f'(y(x,\rho_\ell))\Phi(x,\rho_\ell)\dd x\\[2mm]
\nonumber
&=&I + \frac{1}{\Gamma(\aa)}\sum_{n=1}^N \int_{t_{n-1}}^{t_n} (t_N-x)^{\aa-1}f'(y(x,\rho_\ell))\Phi(x,\rho_\ell)\dd x\\[2mm]    
\nonumber
&=&I + \frac{1}{\Gamma(\aa)}\sum_{n=1}^N \int_0^{h_n} (t_N-t_{n-1}-x)^{\aa-1}f'(y_n(x,\rho_\ell))\Phi_n(x,\rho_\ell)\dd x\\[2mm]\nonumber  
&=& I + \frac{1}{\Gamma(\aa)}\sum_{n=1}^N h_n^\aa\int_0^1 
\left(\frac{r^{N-n+1}-1}{r-1}-\tau\right)^{\aa-1}f'(y_n(\tau h_n,\rho_\ell))\Phi_n(\tau h_n,\rho_\ell)\dd \tau,\\
 \label{PhiTr}
\end{eqnarray}
and, in case of a constant stepsize $h=T/N$, 
\begin{eqnarray}\nonumber
\lefteqn{\Phi(T,\rho_\ell)~\equiv~ \Phi_N(h,\rho_\ell)}\\
&=& I + \frac{h^\aa}{\Gamma(\aa)}\sum_{n=1}^N \int_0^1 
\left(N-n+1-\tau\right)^{\aa-1}f'(y_n(\tau h,\rho_\ell))\Phi_n(\tau h,\rho_\ell)\dd \tau.\qquad \label{PhiT1}
\end{eqnarray}

\subsection{Piecewise quasi-polynomial approximation}
The previous functions are then approximated via a piecewise quasi-polyno\-mial approximation, as described in \cite{BBBI2024}, which we here briefly recall, and generalize to the approximation of the fundamental matrix solution, too.
In more detail, with reference to (\ref{yn}) and (\ref{Phin}), we shall look for approximations:
\begin{eqnarray}\label{sigPsi}
\sigma_n(ch_n,\rho_\ell) &\simeq& y_n(ch_n,\rho_\ell),\\[2mm] \nonumber
\Psi_n(ch_n,\rho_\ell) &\simeq& \Phi_n(ch_n,\rho_\ell), \qquad c\in[0,1], \qquad n=1,\dots,N,
\end{eqnarray}
and, consequently,
\begin{equation}\label{siThT}
y(T,\rho_\ell) \simeq \sigma_N(h_N,\rho_\ell), \qquad \Phi(T,\rho_\ell) \simeq \Psi_N(h_N,\rho_\ell).
\end{equation}
Following steps similar to those in \cite[Section\,2]{BBBI2024}, we consider the expansion of the vector field along the orthonormal polynomial basis, w.r.t. the weight function
$$\omega(x) = \aa(1-x)^{\aa-1}, \qquad s.t.\qquad \int_0^1 \omega(x)\,\dd x=1,$$
resulting into a scaled and shifted family of Jacobi polynomials:\footnote{Here, $\bar P_j^{(a,b)}(x)$ denotes the $j$th Jacobi polynomial with parameters $a$ and $b$, in $[-1,1]$.}
$$ 
P_j(x) := \sqrt{\frac{2j+\aa}\aa} \bar P_j^{(\aa-1,0)}(2x-1), \qquad x\in[0,1], \qquad j=0,1,\dots.
$$ 
In so doing, for $n=1,\dots,N$, one obtains:
$$f(y_n(ch_n,\rho_\ell)) = \sum_{j\ge0} P_j(c)\gamma_j(y_n,\rho_\ell), \qquad c\in[0,1],$$
with
$$
\gamma_j(y_n,\rho_\ell) = \aa\int_0^1(1-\tau)^{\aa-1}P_j(\tau)f(y_n(\tau h_n,\rho_\ell))\dd\tau, \qquad j=0,1,\dots.
$$
The approximations are derived by truncating the infinite series to a finite sum with $s$ terms. Consequently, for $n=1,\dots,N$, one obtains:\footnote{We refer to \cite{ABI2022,BGI2024} for efficient procedures for computing the fractional integrals $I_j^\aa P_j(c)$, $j=0,\dots,s-1.$}
\begin{equation}\label{sign}
\sigma_n(ch_n,\rho_\ell) = \phi_{n-1}^\aa(c,\rho_\ell) + h_n^\aa\sum_{j=0}^{s-1} \gamma_j(\sigma_n,\rho_\ell) I^\aa P_j(c), \qquad c\in[0,1],
\end{equation}
with
\begin{equation}\label{gammaj}
\gamma_j(\sigma_n,\rho_\ell) = \aa\int_0^1(1-\tau)^{\aa-1}P_j(\tau)f(\sigma_n(\tau h_n,\rho_\ell))\dd\tau, \qquad j=0,\dots,s-1.\quad
\end{equation}
and  
\begin{equation}\label{finc}
\phi_{n-1}^\aa(c,\rho_\ell) = \rho_\ell+ \sum_{\nu=1}^{n-1} h_\nu^\aa  \sum_{j=0}^{s-1} J_j^\aa\left(\frac{r^{n-\nu}-1}{r-1}+cr^{n-\nu}\right) \gamma_j(\sigma_\nu,\rho_\ell),
\end{equation}
having set, for $x>1$,\,\footnote{We refer to \cite{BBBI2024,BGI2024} for the efficient computation of such integrals.} 
\begin{equation}\label{Jjaell}
J_j^\aa(x) := \frac{1}{\Gamma(\aa)}\int_0^1(x-\tau)^{\aa-1}P_j(\tau)\dd\tau, \qquad j=0,\dots,s-1.
\end{equation}

If a constant stepsize $h=T/N$ is used, then (\ref{finc}) reads:
\begin{equation}\label{finch}
\phi_{n-1}^\aa(c,\rho_\ell) =\rho_\ell+ h^\aa\sum_{\nu=1}^{n-1} \sum_{j=0}^{s-1} J_j^\aa(n-\nu+c) \gamma_j(\sigma_\nu,\rho_\ell),
\end{equation}
and similarly one modifies (\ref{sign}) and (\ref{gammaj}).

It can be shown (see \cite{BBBI2024}) that $\phi_{n-1}^\aa(c,\rho_\ell)$ is nothing but the approximation of the {\em memory term}
$$ 
G_{n-1}^\aa(c,\rho_\ell)= \rho_\ell + \frac{1}{\Gamma(\aa)}\sum_{\nu=1}^{n-1} h_\nu^\aa\int_0^1 
\left(\frac{r^{n-\nu}-1}{r-1} + cr^{n-\nu}-\tau\right)^{\aa-1}f(y_n(\tau h_n,\rho_\ell))\dd \tau,
$$ 
such that, for all $c\in[0,1]$, and $n=1,\dots,N$:
\begin{equation}\label{ynG}
y_n(ch_n,\rho_\ell) = G_{n-1}^\aa(c,\rho_\ell) + \frac{h_n^\aa}{\Gamma(\aa)}\int_0^c 
\left(c-\tau\right)^{\aa-1}f(y_n(\tau h_n,\rho_\ell))\dd \tau.
\end{equation}
As matter of fact, (\ref{yTr}) corresponds to set $n=N$ and $c=1$ in (\ref{ynG}). 

Similarly, when a constant stepsize $h=T/N$ is used, then
$$
G_{n-1}^\aa(c,\rho_\ell)= \rho_\ell + \frac{h^\aa}{\Gamma(\aa)}\sum_{\nu=1}^{n-1} \int_0^1 
\left(n-\nu+c-\tau\right)^{\aa-1}f(y_n(\tau h,\rho_\ell))\dd \tau,
$$
and  (\ref{ynG}) still formally holds, upon replacing $h_n$ with $h$. Consequently,  (\ref{yT1}) corresponds again to set $n=N$ and $c=1$ in (\ref{ynG}).
\smallskip

The Fourier coefficients (\ref{gammaj}) can be approximated up to machine precision by using the Gauss-Jacobi formula of order $2k$ based at the zeros of $P_k(x)$, $c_1,\dots,c_k$, with corresponding weights $b_1,\dots,b_k$, by choosing  $k$ large enough. As is explained in \cite[Section\,3]{BBBI2024}, this allows formulating the discrete problem for computing them as:\footnote{As is usual, the function $f$, here evaluated in a (block) vector of dimension $k$, denotes the (block) vector made up by $f$ evaluated in each (block) entry of the input argument.}
\begin{equation}\label{vform}
\bfgamma^n = \P_s^\top\Omega \otimes I_m f\left( \bfphi_{n-1}^\aa +h_n^\aa\I_s^\aa\otimes I_m\bfgamma^n\right),
\end{equation}
with, by setting $\gamma_j^n(\rho_\ell)$, $j=0,\dots,s-1$, the approximation to $\gamma_j(\sigma_n,\rho_\ell)$ obtained by using the Gauss-Jacobi quadrature formula,
$$\bfgamma^n = \pmatrix{c} \gamma_0^n(\rho_\ell)\\ \vdots \\ \gamma_{s-1}^n(\rho_\ell) \endpmatrix\in\RR^{sm},\qquad \bfphi_{n-1}^\aa = \pmatrix{c} \phi_{n-1}(c_1,\rho_\ell)\\ \vdots\\ \phi_{n-1}(c_k,\rho_\ell)\endpmatrix\in\RR^{km},$$
and
$$
\P_s = \pmatrix{ccc} P_0(c_1) & \dots & P_{s-1}(c_1)\\ \vdots & &\vdots\\ P_0(c_k) & \dots & P_{s-1}(c_k)\endpmatrix,
\quad \I_s^\aa = \pmatrix{ccc} I^\aa P_0(c_1)& \dots &I^\aa P_{s-1}(c_1)\\ \vdots & &\vdots\\ I^\aa P_0(c_k) & \dots &I^\aa P_{s-1}(c_k)\endpmatrix\quad \in\RR^{k\times s},$$
$$\Omega = \pmatrix{ccc} b_1\\ &\ddots\\ &&b_k\endpmatrix\in\RR^{k\times k}.$$

\begin{rem}\label{noncosta}
It is worth noticing that the discrete problem (\ref{vform}) has (block) dimension $s$, {\em independently of $k$}. This, in turn, allows using relatively large values of $k$, in order to have an accurate approximation of the Fourier coefficients, without increasing too much the computational cost. 

Moreover, the vector $\bfphi_{n-1}^\aa$ in (\ref{vform}) only depends on known quantities, computed at the previous timesteps. 

Further, we observe that also the matrices $\P_s$, $\I_s^\aa$, as well as all the required integrals (\ref{Jjaell}),  can be computed in advance, once for all, and they can be used for each new approximation $\rho_\ell$ in both  Algorithms~\ref{alg1} and \ref{alg2}.  Additionally, it is worth mentioning that, since they only depend on $s,k,\alpha,r$, in principle they could be tabulated, without needing to be evaluated.
\end{rem}

Considering that $$\I^\aa P_j(1) = \frac{1}{\Gamma(\aa)}\int_0^1 (1-x)^{\aa-1}P_j(x)\dd x = \frac{\delta_{j0}}{\Gamma(\aa+1)},\qquad j=0,\dots,s-1,$$
the approximations of the solution at $t_n$ is given by:
\begin{equation}\label{ytn}
y(t_n,\rho_\ell)\simeq \sigma_n(h_n,\rho_\ell)  \equiv \phi_{n-1}^\aa(1,\rho_\ell) + \frac{h_n^\aa}{\Gamma(\aa+1)}\gamma_0^n(\rho_\ell), \qquad n=1,\dots,N.
\end{equation}

According to \cite{BBBI2024} (see also \cite{BGI2024}), we give the following definition.
\begin{defi}\label{fhbvm}  We shall refer to the method defined by (\ref{vform})-(\ref{ytn}), as a {\em Fractional HBVM with parameters $k$ and $s$}, in short {\em FHBVM$(k,s)$}.\end{defi}\smallskip

In particular, from (\ref{siThT}) and (\ref{ytn}) one obtains, considering that $t_N=T$:
\begin{equation}\label{sigmaT}
y(T,\rho_\ell) \simeq \sigma_N(h_N,\rho_\ell) \equiv \phi_{N-1}^\aa(1,\rho_\ell) + \frac{h_N^\aa}{\Gamma(\aa+1)}\gamma_0^N(\rho_\ell).
\end{equation}

\begin{rem}
When $\aa=1$, the polynomials $\{P_j\}_{\ge0}$, become the usual Legendre polynomials orthomormal in $[0,1]$. Consequently, a FHBVM$(k,s)$ method reduces to a standard HBVM$(k,s)$ method, when $\aa=1$.
\end{rem}

In a similar way, for $n=1,\dots,N$:
\begin{equation}\label{Psin}
\Psi_n(ch_n,\rho_\ell) = \Theta_{n-1}^\aa(c,\rho_\ell) + h_n^\aa\sum_{j=0}^{s-1} \Gamma_j(\sigma_n,\rho_\ell) I^\aa P_j(c), \qquad c\in[0,1],
\end{equation}
with
\begin{equation}\label{Gammaj}
\Gamma_j(\sigma_n,\rho_\ell) = \aa\int_0^1(1-\tau)^{\aa-1}P_j(\tau)f'(\sigma_n(\tau h_n,\rho_\ell))\Psi_n(\tau h_n,\rho_\ell)\dd\tau, \qquad  j=0,\dots,s-1,
\end{equation} 
and (see (\ref{Jjaell}))
\begin{equation}\label{Thenc}
\Theta_{n-1}^\aa(c,\rho_\ell) =
I+ \sum_{\nu=1}^{n-1} h_\nu^\aa  \sum_{j=0}^{s-1} J_j^\aa\left(\frac{r^{n-\nu}-1}{r-1}+cr^{n-\nu}\right) \Gamma_j(\sigma_\nu,\rho_\ell).
\end{equation}

Similarly as in (\ref{finch}), when a constant stepsize $h=T/N$ is used, then (\ref{Thenc}) becomes:
\begin{equation}\label{Thench}
\Theta_{n-1}^\aa(c,\rho_\ell) =I+ h^\aa\sum_{\nu=1}^{n-1} \sum_{j=0}^{s-1} J_j^\aa(n-\nu+c) \Gamma_j(\sigma_\nu,\rho_\ell).
\end{equation}

As done for (\ref{gammaj}), by approximating the integrals in (\ref{Gammaj}) by using the same Gauss-Jacobi formula as before, from (\ref{Psin}) and (\ref{Gammaj}) one derives a discrete problem in the form
\begin{equation}\label{vform1}
\bfGamma^n = \P_s^\top\Omega \otimes I_m f'\left( \bfphi_{n-1}^\aa +h_n^\aa\I_s^\aa\otimes I_m\bfgamma^n\right)
\left[ \bfTheta_{n-1}^\aa + h_n^\aa\I_s^\aa\otimes I_m \bfGamma^n\right],
\end{equation}
where $\bfphi_{n-1}^\aa$ and $\bfgamma^n$ have been already computed in (\ref{vform}), 
$$f'\left( \bfphi_{n-1}^\aa +h_n^\aa\I_s^\aa\otimes I_m\bfgamma^n\right)\in\RR^{km\times km}$$ is the block diagonal matrix whose diagonal blocks are given by the corresponding evaluations of the Jacobian of $f$, 
$$\bfTheta_{n-1}^\aa = \pmatrix{c}\Theta_{n-1}^\aa(c_1,\rho_\ell)\\ \vdots \\ \Theta_{n-1}^\aa(c_k,\rho_\ell)\endpmatrix\in\RR^{km\times m},$$ and, by setting $\Gamma_j^n(\rho_\ell)$, $j=0,\dots,s-1$,  the approximation to $\Gamma_j(\sigma_n,\rho_\ell)$ obtained through the Gauss-Jacobi formula,
$$\bfGamma^n =  \pmatrix{c} \Gamma_0^n(\rho_\ell)\\ \vdots \\ \Gamma_{s-1}^n(\rho_\ell) \endpmatrix\in\RR^{sm\times m},$$
with the approximation of the solution at $t_n$ given by:
\begin{equation}\label{Psitn}
\Phi(t_n,\rho_\ell) \simeq \Psi_n(h_n,\rho_\ell) \equiv \Theta_{n-1}^\aa(1,\rho_\ell) + \frac{h_n^\aa}{\Gamma(\aa+1)}\Gamma_0^n(\rho_\ell),\qquad n=1,\dots,N.
\end{equation}
As is clear, (\ref{vform1})-(\ref{Psitn}) define the application of the FHBVM$(k,s)$ method to the variational problem. 

We observe that considerations similar to those made in Remark~\ref{noncosta} for (\ref{vform}) can be now repeated for (\ref{vform1}),  with the additional fact that (\ref{vform1}) amounts to just solving a {\em linear system of equations}. 

At last, from (\ref{siThT}) and (\ref{Psitn}) one eventually obtains:
\begin{equation}\label{PsiT}
\Phi(T,\rho_\ell) \simeq \Psi_N(h_N,\rho_\ell) \equiv \Theta_{N-1}^\aa(1,\rho_\ell) + \frac{h_N^\aa}{\Gamma(\aa+1)}\Gamma_0^N(\rho_\ell).
\end{equation}

\smallskip 
\begin{rem}\label{spectral}
By choosing values of $s$, and $k\ge s$, large enough, it can be seen that the approximations (\ref{sigmaT}) and (\ref{PsiT}) provided by a FHBVM$(k,s)$ method can be accurate up to machine precision.  In fact, from the analysis carried out in  \cite{BBBI2024}, the error in approximating (\ref{yT}) and (\ref{PhiT}) is proved to be bounded  by
$$O(h_1^{2\aa}+h_N^{s+\aa}),$$ if a graded mesh (\ref{tn})--(\ref{hn}) is used, or by $$O(h^{s+\aa-1}),\qquad h=T/N,$$ if a uniform mesh can be considered. This latter case is appropriate when the vector field is everywhere smooth, in a neighborhood of the solution.

Actually, this amounts to using the method as a {\em spectrally accurate method in time}, as is the case for HBVMs  \cite{ABI2020,ABI2023,BIMR2019,BMR2019}. This kind of  approximations will be considered in the implementation of the algorithm listed in Table~\ref{alg1} (and for the simplified version of it, listed in Table~\ref{alg2}), which we shall use for the numerical tests reported in Section\,\ref{num}.
\end{rem}

\subsection{Error estimation}\label{errest}
It is worth mentioning that the procedure explained in the previous section allows to derive, as a by-product, an estimate for the error in the computed solution, due to the fact that, in Algorithm~\ref{alg1}, the iteration is stopped when, for a suitably small tolerance $tol$,
\begin{equation}\label{stop}
|\rho_{\ell+1}-\rho_\ell|\le tol.
\end{equation}
In fact, in such a case, one expects that $|\rho_{\ell+1}-\rho^*|\approx tol$ as well. Consequently, by considering that at the mesh points, for $n=1,\dots,N$:
\begin{equation}\label{sol}
y(t_n,\rho_\ell) \simeq  \sigma_n(h_n,\rho_\ell) \equiv \phi_{n-1}^\aa(1,\rho_\ell) + \frac{h_n^\aa}{\Gamma(\aa+1)}\gamma_0^n(\rho_\ell),
\end{equation}
and, similarly,
$$
\Phi(t_n,\rho_\ell) \simeq \Psi_n(h_n,\rho_\ell) \equiv \Theta_{n-1}^\aa(1,\rho_\ell) + \frac{h_n^\aa}{\Gamma(\aa+1)}\Gamma_0^n(\rho_\ell),
$$
by virtue of the perturbation result of Theorem\,\ref{fundmat}, one derives the estimates
\begin{equation}\label{erro}
\|y(t_n,\rho^*)-y(t_n,\rho_\ell)\|\approx 2\cdot tol \cdot \|\Psi_n(h_n,\rho_\ell)\|, \qquad n=1,\dots,N.
\end{equation}

\subsection{The simplified shooting-Newton algorithm for semi-linear problems}\label{semilin}
For the simplified algorithm in Table~\ref{alg2}, the approximation to $\hat\Phi(T,\rho_\ell)$ is in general problem dependent. However, there is a specific case where an efficient approximation can be readily obtained, i.e., when problem (\ref{tvp}) is semi-linear:
\begin{equation}\label{semi}
y^{(\aa)}(t) = Ly(t) + g(y(t)), \qquad t\in[0,T], \qquad y(T)=\eta\in\RR^m,
\end{equation} 
with $L\in\RR^{m\times m}$ and $\|L\|\gg\|g\|$ in a suitable neighborhood of the solution. In fact, in such a case, one can approximate the variational problem (\ref{varpro}) with the linear part only (thus, independent of $\rho$),
$$
\hat\Phi^{(\aa)}(t) = L\,\Phi(t), \qquad t\in[0,T], \qquad \Phi(0) = I.
$$
In so doing, one obtains the approximation
$$\hat\Phi(T) = E_\aa( LT^\aa) := \sum_{j\ge0} \frac{(L T^\aa)^j}{\Gamma(\aa j+1)},$$
with $E_\aa$ the one-parameter Mittag-Leffler function. Furthermore, since we are interested in deriving only a convenient approximation to the fundamental matrix function, we can truncate the above series to a suitable finite sum. As an example,  for a given tolerance $\varepsilon$, one may consider the approximation:
\begin{equation}\label{hPhi}
\hat\Phi(T) = \sum_{j=0}^J\frac{(L T^\aa)^j}{\Gamma(\aa j+1)},\qquad
s.t. \qquad \frac{\|(L T^\aa)^ J\|}{\Gamma(\aa J+1)} \le \varepsilon.
\end{equation}

\section{Numerical Tests}\label{num}
We here report a few numerical tests aimed at illustrating the theoretical findings. For all tests, we use $k=22$ and $s=20$, so that we are going to use a FHBVM(22,20) method. In other words, we use  a local polynomial approximation of degree $s-1=19$ for the vector field, coupled with a Gauss-Jacobi quadrature formula of order $2k=44$ for approximating the Fourier coefficients (\ref{gammaj}) and (\ref{Gammaj}). We have used straightforward fixed-point iterations, derived from (\ref{vform}) and (\ref{vform1}), respectively, to solve the corresponding discrete problems.\footnote{We have used a fixed point iteration also for solving (\ref{vform1}), despite the fact that it is just a linear system of equations.}  Namely,
$$
\bfgamma^{n,j+1} = \P_s^\top\Omega \otimes I_m f\left( \bfphi_{n-1}^\aa +h_n\I_s^\aa\otimes I_m\bfgamma^{n,j}\right), \qquad j=0,1,\dots,
$$
starting from $\bfgamma^{n,0}=\bfzero$, for (\ref{vform}), and similarly for (\ref{vform1}).\footnote{More refined nonlinear iterations are described in \cite{BGI2024}.} 
The iterations are carried out until full machine accuracy is gained, so that we expect full machine accuracy for the computed approximation (\ref{PsiT}) to $\Phi(T,\rho_\ell)$, as well as a corresponding fully accurate discrete solution (\ref{sol}).  

We consider 6 test problems:
\begin{itemize}
\item the first 3 problems are the same scalar test problems in \cite[Section\,5]{DiUl2023};
\item the last 3 problems are vector problems.
\end{itemize}
For all problems, (see (\ref{tvp})) the initial guess $\rho_0=\eta$ has been considered.
All numerical tests have been performed in Matlab$^\copyright$ (Rel.\,2023b) on a Silicon M2 laptop with 16GB of shared memory. The iteration of Algorithm~\ref{alg1}  is stopped by using a tolerance $tol=10^{-14}$ in (\ref{stop}).
The same tolerance and stopping criterion will be used for Algorithm~\ref{alg2}. To be more precise, we shall consider Algorithm~\ref{alg1} for solving all the problems, and Algorithm~\ref{alg2} for solving the last problem, which is semi-linear.

\subsection{Example~1} 
The first problem is given by:
\begin{eqnarray}\nonumber
y^{(0.3)} &=& -|y|^{1.5} +\frac{8!}{\Gamma(8.7) }t^{7.7} -
           3\frac{\Gamma(5.15)}{\Gamma(4.85)}t^{3.85} +
           \left( \frac{3}2t^{0.15} - t^4 \right)^3 + \frac{9}4\Gamma(1.3),\\[2mm] \label{prob1} &&\qquad t\in[0,1], \qquad y(1)=\frac{1}4,
\end{eqnarray}
whose solution is
$$
y(t) = t^8-3\,t^{4.15}+\frac{9}4\,t^{0.3}.
$$
In this case, we use a uniform mesh with stepsize  $h=1/10$. The method converges in 4 iterations producing the approximations in Table~\ref{tabex1}.
\begin{table}
\caption{Results for Problem (\ref{prob1}).}
\label{tabex1}
\centering
\begin{tabular}{|r|r|}
\hline
$\ell$ & $\rho_\ell$\hspace{1.5cm} \\
\hline
0&     2.500000000000000e-01\\
1&    -6.974105632991501e-03\\
2&    -6.267686473630449e-06\\
3&    -5.040632537594832e-12\\
4&    -2.508583045846617e-15\\
\hline
\end{tabular}
\end{table}
It is possible to appreciate the quadratic convergence of the iteration in the first iterations (in the last one, roundoff errors clearly dominate). 
The maximum error on the final solution is $\approx 6\cdot 10^{-15}$, whereas the estimated one, by using (\ref{erro}), is $2\cdot 10^{-14}$.

\subsection{Example~2} 
The second problem is given by:
\begin{eqnarray}\nonumber
y^{(0.3)} &=& -\frac{3}2 \,y, \qquad t\in[0,7], \\[2mm] \label{prob2}
 y(7) &=& \frac{14}5 E_{0.3}\left(-\frac{3}2 \,7^{\,0.3}\right)\simeq .6476128469955936,
\end{eqnarray}
with $E_{0.3}$ the Mittag-Leffler function of order 0.3, with solution
$$
y(t) =  \frac{14}5 E_{0.3}\left(-\frac{3}2 \, t^{\,0.3}\right).
$$
We refer to \cite{Ga2015ml} and the accompanying software\, {\tt ml.m}, for an efficient Matlab$^\copyright$ implementation of the Mittag-Leffler function.

In this case a uniform mesh is not appropriate, since the vector field is proportional to the solution, which has a singularity in the first derivative at the origin. Consequently, we use a graded mesh, according to (\ref{hn}), with $h_1=10^{-14}$ and $N=500$.
Taking into account (\ref{rT}), this implies $r\simeq 1.064914852480467$.  According to the result of Theorem~\ref{linear}, the method converges in one iteration, as is shown in Table~\ref{tabex2}.
\begin{table}
\caption{Results for Problem (\ref{prob2}).}
\label{tabex2}
\centering
\begin{tabular}{|r|r|}
\hline
$\ell$ & $\rho_\ell$\hspace{1.5cm} \\
\hline
0  &   .6476128469955936\\
1  &   2.799999999999968\\
\hline
\end{tabular}
\end{table}
The maximum error on the final solution is $\approx 2\cdot 10^{-13}$, whereas the estimated one, by using is (\ref{erro}), is $2\cdot 10^{-14}$ (in this case, the maximum error is essentially close to the origin, where there is the singularity of the derivative).

\subsection{Example~3} 
The third problem is given by:
\begin{eqnarray}\nonumber
y^{(0.7)} &=& \frac{1}{t+1}\sin(t\cdot y), \qquad t\in[0,20], \\[2mm] \label{prob3}
 y(20) &=& 0.8360565285776644,
\end{eqnarray}
which corresponds to the initial value $y(0)=1$. In such a case, the solution is not known in closed form, and the final value has been taken from a reference solution computed by using the FHBVM(22,20) method with a constant stepsize $h=0.02$ (i.e., by using 1000 timesteps). This solution is depicted in Figure~\ref{prob3fig}, and the estimated error (by using a doubled mesh) is $\approx 1.8\cdot 10^{-14}$. 

\begin{figure} 
\centering
\includegraphics[width=10cm]{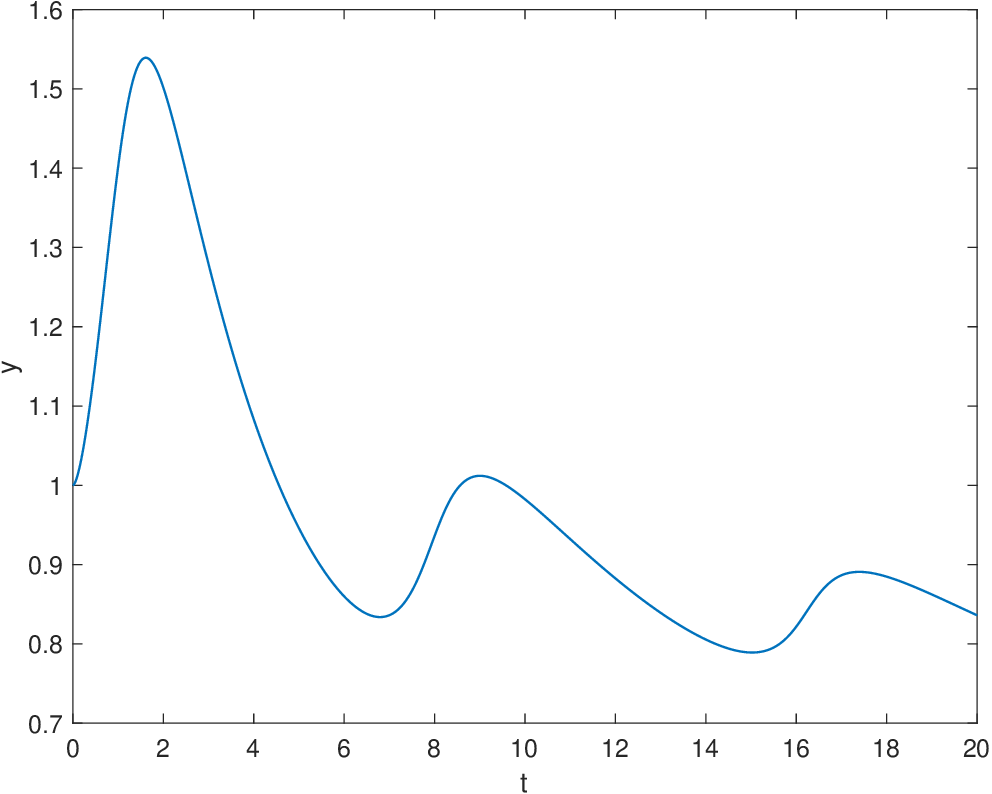}
\caption{Reference solution for problem (\ref{prob3}).}
\label{prob3fig}
\end{figure}

For solving problem (\ref{prob3}), we use a uniform mesh with stepsize $h=20/400=1/20$. The method converges in 6 iterations, producing the approximations listed in Table~\ref{tabex3}.
\begin{table}
\caption{Results for Problem (\ref{prob3}).}
\label{tabex3}
\centering
\begin{tabular}{|r|r|}
\hline
$\ell$ & $\rho_\ell$\hspace{1.5cm} \\
\hline
0&     .8360565285776644\\
1&     1.115178544783084\\
2&     1.057854760373079\\
3&     1.006528883050734\\
4&     .9999714859685488\\
5&     .9999999991678453\\
6&     .9999999999999855\\
\hline
\end{tabular}
\end{table}
Also in the case, it is possible to appreciate a quadratic-like convergence of the iteration. 
The maximum error in the final solution is $\approx 2\cdot 10^{-14}$, whereas the estimated one, by using is (\ref{erro}), is $6\cdot 10^{-14}$.

\subsection{Example~4} 
We now consider the following linear (vector) FDE-TVP:
\begin{eqnarray}\label{prob4}
y^{(0.5)} &=& \pmatrix{rr}-3 & ~0 \\ -2 &~-1\endpmatrix y, \qquad t\in[0,2],\\[2mm]
y(2)        &=& \pmatrix{c} 2\,E_{0.5}\left(-3\cdot 2^{0.5}\right)\\[1mm]
2\,E_{0.5}\left(-3\cdot 2^{0.5}\right)+E_{0.5}\left(-2^{0.5}\right)\endpmatrix 
\simeq \pmatrix{r} .2591172572977875   \\[1mm] .5953212597441289 \endpmatrix,\nonumber
\end{eqnarray}
having solution
$$y(t) = \pmatrix{c} 2\,E_{0.5}\left(-3\cdot t^{0.5}\right)\\[1mm]
2\,E_{0.5}\left(-3\cdot t^{0.5}\right)+E_{0.5}\left(- t^{0.5}\right)\endpmatrix,$$
corresponding to the initial value $y(0) = (2,\,3)^\top$. Since the vector field is linearly related to the solution, which has a 
singularity in the first derivative at the origin, we use a graded mesh with $h_1=10^{-14}$ and $N=100$.  According to the result of Theorem~\ref{linear}, convergence is gained in just one iteration, as is confirmed by Table~\ref{tabex4}.
\begin{table}
\caption{Results for Problem (\ref{prob4}).}
\label{tabex4}
\centering\begin{tabular}{|r|rr|}
\hline
$\ell$ & \multicolumn{2}{c|}{$\rho_\ell$} \\
\hline
 0 &    .2591172572977875   &  .5953212597441289\\
 1 &    2.000000000000012    &  3.000000000000012\\
\hline
\end{tabular}
\end{table}
The maximum error in the final solution is $\approx 7\cdot 10^{-15}$, whereas the estimated one, by using  (\ref{erro}), is $2\cdot 10^{-14}$.

\subsection{Example~5} 

\begin{figure}[t]
\centering
\includegraphics[width=10cm]{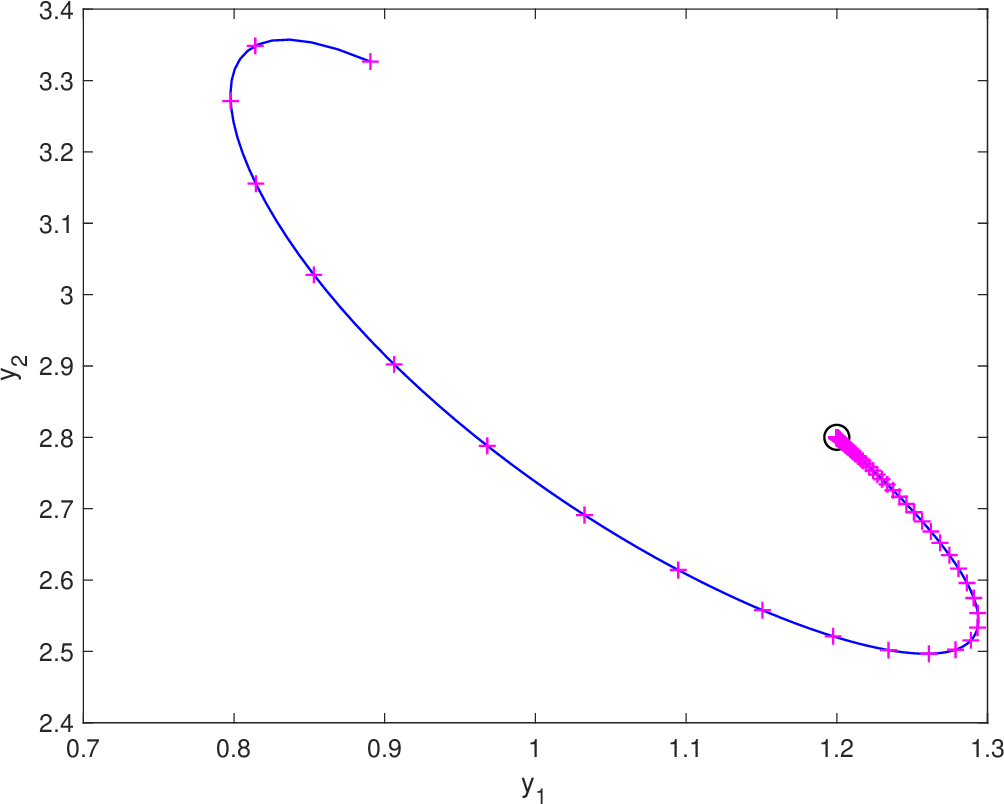}
\caption{Reference solution for problem (\ref{prob5}) (solid line). The circle denotes the actual initial condition, whereas the pluses denote the final approximate solution.}
\label{prob5fig}
\end{figure}

We now consider the following fractional Brusselator model:
\begin{eqnarray}\label{prob5}
y_1^{(0.7)} &=& 1-4y_1+y_1^2y_2,\\[2mm] \nonumber
y_2^{(0.7)} &=& 3y_1-y_1^2y_2, \qquad t\in[0,5],\\[2mm]
y(5) &=& \pmatrix{c}.8904632063462272\\[1mm]  3.326603532694057 \endpmatrix.\nonumber
\end{eqnarray}
In such a case, the solution is not explicitly known, and we have computed the final value starting from
$y(0)=(1.2,\,2.8)^\top$ by using the FHBVM(22,20) method with a graded mesh with $h_1=10^{-14}$ and $N=1000$: the reference solution is plotted in Figure~\ref{prob5fig} in solid line, with the initial condition marked by the circle. We solve the problem by using the FHBVM(22,20) method on a graded mesh with $h_1=10^{-14}$ and $N=200$. In so doing, the algorithm described in Table~\ref{alg1} converges in  5 iterations, with a quadratic-like order, obtaining the results
listed in Table~\ref{tabex5}.
\begin{table}
\caption{Results for Problem (\ref{prob5}).}
\label{tabex5}
\centering\begin{tabular}{|r|rr|}
\hline
$\ell$ & \multicolumn{2}{c|}{$\rho_\ell$} \\
\hline
0&     .8904632063462272&     3.326603532694057\\
1&     1.195221947994766&     2.798766749634182\\
2&     1.199608077826518&     2.800213499824565\\
3&     1.199998157974212&     2.800001859877902\\
4&     1.199999999973615&     2.800000000034993\\
5&     1.199999999999924&     2.800000000000298\\
\hline
\end{tabular}
\end{table}
The maximum estimated error in the final solution is $\approx 10^{-13}$, whereas that in the final point is
$\approx 4\cdot 10^{-16}$.

\subsection{Example~6} 

 As a last example, we consider a family of semi-linear problems with $y\in\RR^{2\nu}$ and
\begin{equation}\label{ex6}  
y^{(0.7)} = \pmatrix{cc} & I_\nu \\-I_\nu\endpmatrix y +  \frac{1}{20}\cos(D_\nu y), \qquad t\in[0,5],
\end{equation}
where $I_\nu\in\RR^{\nu\times \nu}$ is the identity matrix, the function $\cos$ is meant to be applied in vector mode, and
$$D_\nu = \diag\left( 1,\, 2,\,\dots, 2\nu\right)^{-1}.$$ 
The reference solution at $t=5$ has been computed by using the FHBVM(22,20) method on a graded mesh with $N=300$ and $h_1=10^{-14}$, solving (\ref{ex6}) starting from the initial value with entries:
\begin{equation}\label{ex6y0}
y_i(0) = \frac{1}i\cos\left( (i-1)\frac{\pi}\nu\right), \qquad i=1,\dots,2\nu. 
\end{equation}
We solve, at first, the FDE-TVP (\ref{ex6}) with $y(5)$ given, by using Algorithm~\ref{alg1} with the FHBVM(22,20) method on a graded mesh with $N=35$ and $h_1=10^{-8}$, for $\nu=1,\dots,35$, thus solving FDE-TVPs having dimension 2,\,4,\,\dots,\,70. 

The algorithm in Table~\ref{alg1} turns out to always converge in 4--5 iterations. The error in the computed initial value is always less than  $1.5\cdot 10^{-13}$. In Figure~\ref{fig3} is the plot of the execution mean times (over 5 runs) of the algorithm versus the dimension of the problem. In more detail, the figure plots:
\begin{itemize}
\item the total execution time (all times are in {\tt sec});
\item the time for computing the required memory terms  $\phi_{n-1}^\aa(c,\rho_\ell)$ (\ref{finc})  in the local problems (LPs);
\item the time for solving the local problems (\ref{vform}) ;
\item the time for computing the memory terms  (\ref{Thenc})  in the local variational problems (LVPs);
\item the time for solving the local variational problems (\ref{vform1}).
\end{itemize}  
According to Remark~\ref{noncosta}, we have not considered the pre-processing time for evaluating the integrals $I^\aa P_j$ in (\ref{sign}) and $J_j^\aa(x)$ (\ref{Jjaell}), also because they require an extended precision arithmetic (quadruple precision would be enough) but, at the moment, they are computed symbolically in Matlab, and not numerically, so that this part of the code is not yet optimized.

From the obtained results, one may conclude that most of the computational time  of Algorithm~\ref{alg1} is spent in the solution of the variational problem: in particular, the evaluation of the memory terms for the local variational problems. 
 For this reason, we now consider Algorithm~\ref{alg2} for solving problem (\ref{ex6}). In fact, since the problem is in the form (\ref{semi}), we can use the (quite cheap) approximation (\ref{hPhi}) in place of the fundamental matrix function. Having fixed a tolerance $\varepsilon=10^{-10}$, this results in using $J=40$ in (\ref{hPhi}), which is quite inexpensive. In such a case, the algorithm in Table~\ref{alg2} converges in 9--10 iterations, instead of 4--5.  
Nevertheless, the overall execution time results to be relatively small, due to the fact that the solution of the variational problem is no more required.  Figure~\ref{fig4} contains the comparison between the total execution times of Algorithm~\ref{alg1}, as seen in Figure~\ref{fig3}, and of Algorithm~\ref{alg2}: this latter is used for solving problem (\ref{ex6})  with $\nu=5,10,15,20,\dots,405$. The highest dimension ($2\nu=810$) is chosen because the corresponding execution time is practically the same as that of Algorithm~\ref{alg1} when solving the problem of dimension 70 (about 7.5 sec). This  clearly shows the superiority of the simplified  shooting-Newton iteration over the original one, for this semi-linear problem.

\begin{figure}[ph]
\centering
\includegraphics[width=10cm]{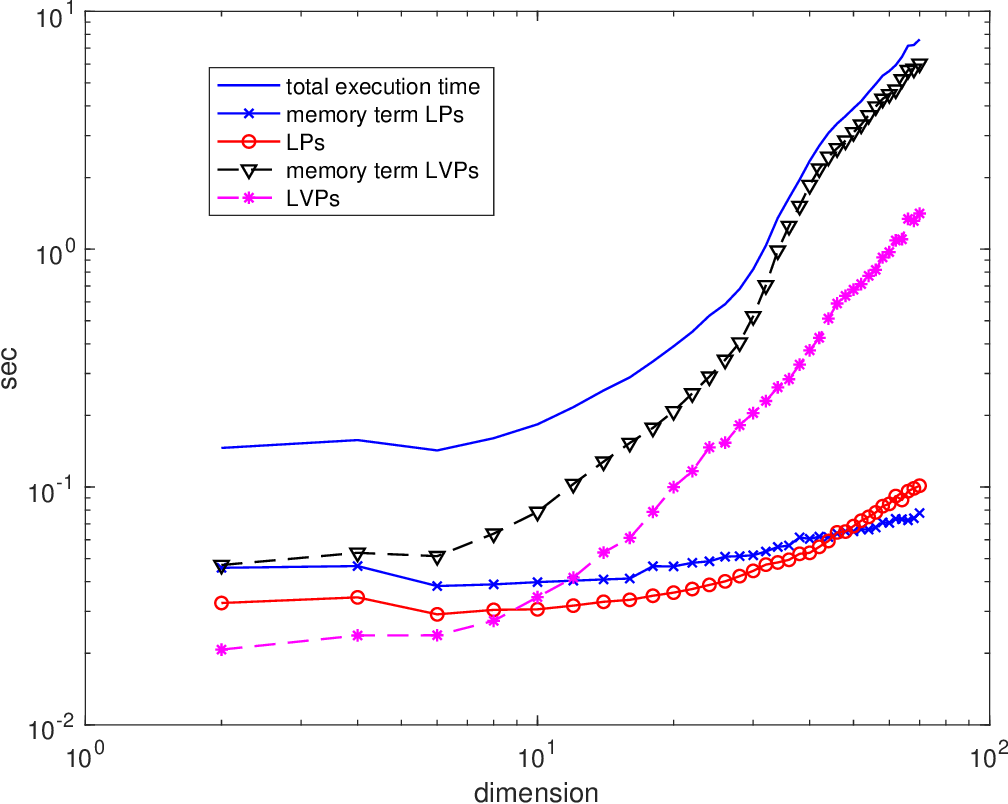}
\caption{Execution times of Algorithm~\ref{alg1} for solving problem (\ref{ex6}) with $y(5)$ given, for dimensions ranging from 2 to 70. See the text for details.}
\label{fig3}
\bigskip\bigskip

\includegraphics[width=10cm]{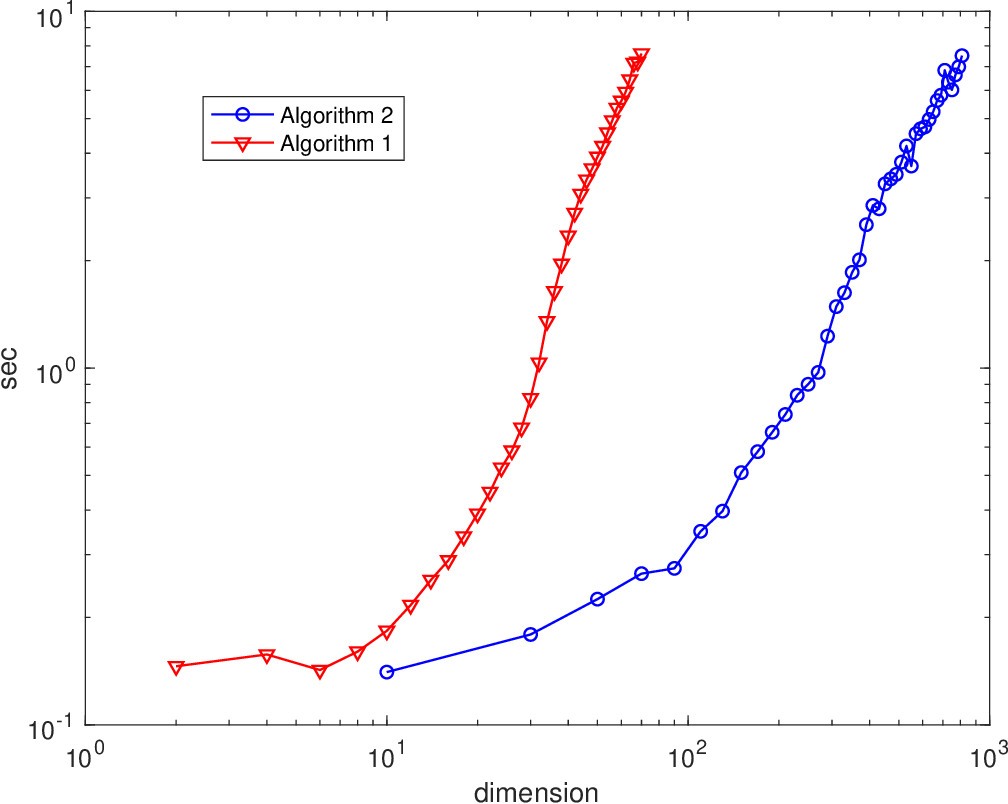}
\caption{Comparison of the total execution times of Algorithm~\ref{alg1} and Algorithm~\ref{alg2} for solving problem (\ref{ex6}) with $y(5)$ given. See the text for details.}
\label{fig4}
\end{figure}

\section{Conclusions}\label{fine}
In this paper we have described a novel shooting procedure which, coupled with the Newton method, proves very appealing for numerically solving  terminal value problems for fractional differential equations. The implementation details of the given procedure have been thoroughly given, when the underlying numerical methods are FHBVMs. These latter methods, when used as  spectrally accurate methods in time, allow deriving very accurate solutions, along with a suitable estimate of the error in the computed solution.  

A corresponding cheaper procedure, relying on a simplified Newton method, has been also described. This latter procedure appears to be very promising for semi-linear problems since, in such a case, the associated variational equation is no more required. 
Numerical tests on both scalar and vector problems confirm the effectiveness of the presented approach. 

Further directions of investigations include the extension for solving two-point boundary value problems, as well as the efficient numerical solution of the local variational problems, due to the fact that they amount to solving just linear systems of algebraic equations.




\end{document}